\documentclass[11pt]{article}

\usepackage[letterpaper,margin=1in]{geometry}
\usepackage{amsmath,amssymb,amsthm,mathtools}
\usepackage{booktabs}
\usepackage{graphicx}
\usepackage{microtype}
\usepackage{xcolor}
\usepackage{fontspec}
\usepackage{float}
\DeclareFontFamily{U}{msb}{}
\DeclareFontShape{U}{msb}{m}{n}{<->msbm10}{}
\usepackage[hidelinks]{hyperref}
\hypersetup{
  pdftitle={A Superdiffusive Local Limit Theorem for the Elephant Random Walk and the Breakdown of Log-Concavity},
  pdfauthor={Hélio Trinas and Glauco Valle}
}

\numberwithin{equation}{section}

\newtheorem{theorem}{Theorem}[section]
\newtheorem{proposition}[theorem]{Proposition}
\newtheorem{lemma}[theorem]{Lemma}

\theoremstyle{remark}
\newtheorem{remark}[theorem]{Remark}

\newcommand{\E}{\mathbb{E}}
\newcommand{\Pp}{\mathbb{P}}
\newcommand{\R}{\mathbb{R}}
\newcommand{\Z}{\mathbb{Z}}

\title{A Superdiffusive Local Limit Theorem for the Elephant Random Walk\\
and the Breakdown of Log-Concavity}
\author{Hélio Trinas\thanks{E-mail: \href{mailto:helio@dme.ufrj.br}{helio@dme.ufrj.br}. Hélio Trinas was supported by CAPES.} \and Glauco Valle\thanks{E-mail: \href{mailto:glauco.valle@im.ufrj.br}{glauco.valle@im.ufrj.br}. Glauco Valle was supported by CNPq grants 307938/2022-0 and 403423/2023-6, FAPERJ grant E-26/200.442/2023.}}
\date{}

\newcommand{\keywordsandmsc}{%
  \begingroup
  \renewcommand{\thefootnote}{}%
  \footnotetext{%
    \noindent\textbf{Keywords.}
    Elephant random walk; local limit theorem; superdiffusive regime;
    log-concavity; non-Gaussian limit law.

    \noindent\textbf{2020 Mathematics Subject Classification.}
    60K37.%
  }%
  \endgroup}

\begin{document}
\maketitle
\keywordsandmsc

\begin{abstract}
Let $(S_n)_{n\ge 1}$ be the one-dimensional Elephant Random Walk (ERW) in the superdiffusive regime, i.e. with memory parameter $p\in(3/4,1)$, and first-step bias $\Pp(S_1=1)=q$. Set
$a=2p-1\in(1/2,1)$. If $f_{q,a}$ denotes the density of the superdiffusive limit
$\mathcal{L}_{q,a}$, we prove the uniform local limit theorem
\[
\lim_{n\to \infty} \sup_{\substack{j\in\Z\\ j\equiv n\ ({\rm mod}\ 2)}}
 \left|\frac{n^a}{2}\Pp(S_n=j)-f_{q,a}\!\left(\frac{j}{n^a}\right)\right| = 0.
\]
The proof relies on a feature specific to the classical ERW: the exact
recurrence relation for the probability mass function (p.m.f.) of $S_n$. This
recurrence yields a uniform $O(n^{-a})$ bound for the p.m.f. and a uniform $O(n^{-2a})$ bound for its first discrete
differences. A piecewise-linear interpolation argument then upgrades
the known weak convergence to uniform local convergence.

We next disprove a conjecture of Gu\'erin, Laulin, Raschel and Simon
\cite[Remark 2.3]{guerin-laulin-raschel-simon} concerning the walk with
$q=1$. Writing $\mathcal{L}_a:=\mathcal{L}_{1,a}$ and
$f_a:=f_{1,a}$, define
\[
a_{\rm ev}
:=
\sup\left\{
b\in(1/2,1]:
\begin{array}{c}
\text{the p.m.f. of $S_n$ is eventually log-concave}\\
\text{for every $a\in(1/2,b)$}
\end{array}
\right\},
\]
and
\[
a_\star
:=
\inf\left\{
b\in(1/2,1):
f_a\text{ is not log-concave for every }a\in(b,1)
\right\}.
\]
The conjecture in \cite[Remark 2.3]{guerin-laulin-raschel-simon}
is equivalent to $a_{\rm ev}=1$. Their results imply that
$a_{\rm ev}\geq(\sqrt{5}-1)/2$, and we prove
\[
\frac{\sqrt{5}-1}{2}
\leq a_{\rm ev}
\leq \min\{0.80399,a_\star\} \le a_\star < 0.918.
\]
The bounds $a_\star<0.918$ and $a_{\rm ev}\leq0.80399$ follow,
respectively, from the first three exact moments of $\mathcal{L}_a$
together with the sharp skewness inequality for centered log-concave
distributions, and from a certified finite-order analysis of the exact
p.m.f. recurrence. Finally, direct numerical iterations of the
recurrence provide evidence that $a_{\rm ev}\approx0.80399$,
equivalently $p_{\rm ev}\approx0.902$.
\end{abstract}

\section{Introduction}

The Elephant Random Walk (ERW) was introduced by Sch\"utz and Trimper
\cite{schutz-trimper}. It is a non-Markovian nearest-neighbor walk whose next
increment is obtained by selecting one of the previous increments uniformly and
then either repeating or reversing it. In one dimension the memory parameter
$p=3/4$ marks the transition between the diffusive and superdiffusive regimes.
When $p>3/4$ and $a=2p-1$, the normalized position $S_n/n^a$ converges almost
surely to a non-Gaussian random variable $\mathcal{L}_{q,a}$; see
\cite{baur-bertoin,bercu}. Kubota and Takei \cite{kubota-takei} obtained Gaussian
fluctuations around the random leading term.

The distribution of $\mathcal{L}_{q,a}$ is now understood at a finer level. Gu\'erin,
Laulin and Raschel \cite[Theorems 1.3 and 1.4]{guerin-laulin-raschel} proved that
it has a positive bounded smooth density on the whole line and derived a nonlinear
recurrence for its moments. Conditioning on the first increment also gives
$\mathcal{L}_{q,a}\stackrel{d}{=}(2\xi_q-1)\mathcal{L}_a$, where $\mathcal{L}_a:=\mathcal{L}_{1,a}$ and
$\xi_q\sim\operatorname{Bernoulli}(q)$ is independent of $\mathcal{L}_a$. Gu\'erin, Laulin,
Raschel and Simon \cite[Theorem 1.1, Proposition 2.2 and Corollary 2.4]
{guerin-laulin-raschel-simon} established unimodality and log-concavity results for the density of $\mathcal{L}_a$.

Weak convergence of $S_n/n^a$, however, does not determine the asymptotics for the probability mass function (p.m.f.) of $S_n$. Fan, Hu and Ma \cite{fan-hu-ma} obtained local limit
estimates in the diffusive and critical regimes $p\leq3/4$. More recently,
Peres and Qin \cite{peres-qin} proved general upper bounds for transition
probabilities of step-reinforced walks, a framework that includes the classical
ERW; these are bounds rather than a local asymptotic at the
natural superdiffusive scale. To the best of our knowledge, no previous uniform
local limit theorem was available for the classical ERW in the genuinely
superdiffusive regime. Our first result fills this gap. Its proof is driven by the exact recurrence for the
p.m.f. of $S_n$. The same recurrence closes for its first discrete difference,
giving exactly the regularity needed for interpolation. No higher-order difference
estimate is used.

Our local limit theorem also sheds light on a conjecture concerning the
shape of the finite-time distributions, formulated in
\cite[Remark 2.3]{guerin-laulin-raschel-simon}. Consider the walk with
$q=1$. For every integer $r\geq0$, the authors proved that there exists
a threshold $a_r$ such that the p.m.f. of $S_n$ is log-concave for all
$n\geq r+3$ if and only if $a\leq a_r$. The sequence $(a_r)_{r\geq0}$
is increasing, and they conjectured that $a_r\uparrow1$. Thus, their
conjecture would imply that, for every fixed $a\in(1/2,1)$, the
finite-time distributions are eventually log-concave.

Since $(a_r)_{r\geq0}$ is increasing and bounded above by $1$, define
\[
a_{\mathrm{ev}}
:=
\lim_{r\to\infty}a_r
=
\sup\left\{
b\in(1/2,1]:
\begin{array}{c}
\text{the p.m.f. of $S_n$ is eventually log-concave}\\
\text{for every $a\in(1/2,b)$}
\end{array}
\right\}.
\]
The conjecture is therefore precisely the assertion that $a_{\mathrm{ev}}=1$. To describe the loss of log-concavity of the limiting distribution, write $f_a:=f_{1,a}$ and define
\[
a_\star
:=
\inf\left\{
b\in(1/2,1):
f_a\text{ is not log-concave for every }a\in(b,1)
\right\}.
\]
Our local limit theorem implies that log-concavity of the p.m.f. along
an unbounded sequence of times forces $f_a$ to be log-concave.
Consequently, $a_{\mathrm{ev}}\leq a_\star$.

Using the first three exact moments of $\mathcal{L}_a$ and the sharp
skewness inequality for centered log-concave distributions, we show
that $f_a$ is not log-concave for every $a$ above the relevant solution
of the equality case in the skewness bound. This solution is
approximately $0.917665$, and hence $a_\star<0.918$. Separately, a certified finite-order analysis of the exact p.m.f. recurrence gives
$a_{\mathrm{ev}}\leq0.80399$. Combining these estimates with the lower bound from \cite[Proposition 2.2]{guerin-laulin-raschel-simon}, we obtain
\[
\frac{\sqrt5-1}{2}
\leq a_{\mathrm{ev}}
\leq\min\{0.80399,a_\star\} \le 
a_\star<0.918.
\]
In particular, both $a_{\mathrm{ev}}<1$ and $a_\star<1$, and the
conjecture is disproved.

These thresholds leave three possible regimes. If
$a<a_{\mathrm{ev}}$, the p.m.f. is eventually log-concave and $f_a$ is
log-concave. If $a_{\mathrm{ev}}<a\leq a_\star$, the p.m.f. is not
eventually log-concave, while the log-concavity of $f_a$ is not
determined by the present arguments. If $a>a_\star$, then $f_a$ is not
log-concave and the transfer principle implies that only finitely many
finite-time rows can be log-concave. The intermediate regime may be
empty if $a_{\mathrm{ev}}=a_\star$.

Finally, direct numerical iterations of the exact p.m.f. recurrence
provide evidence that $a_{\mathrm{ev}}\approx0.80399$, $p_{\mathrm{ev}}=(1+a_{\mathrm{ev}})/2\approx0.902$. The exact values of $a_{\mathrm{ev}}$ and $a_\star$, as well as the behaviour at the boundary points, remain open.

\section{The ERW and the p.m.f. recurrence}

Let $(X_n)_{n\geq1}$ be a sequence of increments taking values in $\{-1,+1\}$, with $\Pp(X_1=1)=q\in[0,1]$, 
and set $S_0=0$ and $S_n=X_1+\cdots+X_n$ for $n\geq1$. At time $n+1$, an index is chosen uniformly at random from $\{1,\ldots,n\}$. The corresponding increment is repeated with probability $p\in(0,1)$ and its sign is reversed with probability $1-p$. We parametrize the model by $a=2p-1\in(-1,1)$ rather than by $p$. Thus, $(S_n)_{n\geq0}$ is the ERW with memory parameter $a$ and initial bias $q$. Notice that $a>0$ favors repetition, whereas $a<0$ favors sign reversal. The ERW is in the superdiffusive regime when $a\in(1/2,1)$.


Define
\[
u_{n,k}:=\Pp(S_n=2k-n),\qquad k\in\Z,
\]
and note that $u_{n,k}=0$ whenever $k\notin\{0,\ldots,n\}$.
On the event $\{S_n=2k-n\}$, exactly $k$ of the first $n$
increments are equal to $+1$, while the remaining $n-k$ increments
are equal to $-1$. Consequently,
\begin{equation}\label{eq:conditional}
\begin{aligned}
\Pp(X_{n+1}=1\mid S_n=2k-n)
   &=\frac{1-a}{2}+\frac{ak}{n}.
\end{aligned}
\end{equation}
To reach $2k-(n+1)$ at time $n+1$, the walk must either be at
$2k-n$ at time $n$ and take a $-1$ step, or be at
$2(k-1)-n=2k-n-2$ at time $n$ and take a $+1$ step. Conditioning on these two possibilities and using \eqref{eq:conditional}, we obtain the known
recurrence
\begin{equation}\label{eq:massrec}
u_{n+1,k}
=
A_{n,k}u_{n,k}
+
B_{n,k}u_{n,k-1},
\end{equation}
where
\begin{equation}\label{eq:coefficients}
A_{n,k}
=
\frac{1+a}{2}-\frac{ak}{n},
\qquad
B_{n,k}
=
\frac{1-a}{2}+\frac{a(k-1)}{n}.
\end{equation}

\smallskip

\begin{proposition}\label{prop:lattice}
For each fixed $a\in(1/2,1)$ there are finite positive constants $C_0,C_1$ such that,
for every $q\in[0,1]$ and $n\geq1$,
\begin{equation}\label{eq:lattice-bounds}
 \max_k u_{n,k}\leq C_0n^{-a},
 \qquad
 \max_k|u_{n,k}-u_{n,k-1}|\leq C_1n^{-2a}.
\end{equation}
\end{proposition}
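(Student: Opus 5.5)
The plan is to exploit two exact properties of the recurrence \eqref{eq:massrec}: its coefficients sum to a constant slightly below $1$, and the first difference satisfies a recurrence of the same form with an even smaller coefficient sum. Both bounds should then follow from a simple one-step contraction iterated in $n$, with constants independent of $q$ because the initial data are bounded by $1$ for every $q$.

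\emph{Step 1 (sup bound).} From \eqref{eq:coefficients} one has $A_{n,k}+B_{n,k}=1-a/n$. For $0\le k\le n$ we have $A_{n,k}\ge (1-a)/2>0$. For $1\le k\le n+1$ we have $B_{n,k}\ge (1-a)/2\ge0$. In the cases excluded here ($k=0$ for $B$, $k=n+1$ for $A$), the coefficient multiplies a vanishing $u$. Hence, with $M_n:=\max_k u_{n,k}$, \eqref{eq:massrec} gives
\[
M_{n+1}\le(1-a/n)M_n .
\]
Since $1-a/n\ge0$ for all $n\ge1$, iterating from $M_1\le1$ gives $M_n\le\prod_{m<n}(1-a/m)\le C_0n^{-a}$, uniformly in $q$.

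\emph{Step 2 (difference recurrence).} Put $D_{n,k}:=u_{n,k}-u_{n,k-1}$. Subtract \eqref{eq:massrec} at $k-1$ from \eqref{eq:massrec} at $k$, and use $A_{n,k-1}=A_{n,k}+a/n$ and $B_{n,k-1}=B_{n,k}-a/n$. This should give the closed identity
\[
D_{n+1,k}=A_{n,k}D_{n,k}+\Bigl(B_{n,k}-\frac an\Bigr)D_{n,k-1},
\]
whose coefficients sum to $1-2a/n$. Now $B_{n,k}-a/n=(1-a)/2+a(k-2)/n$, which is nonnegative for $2\le k\le n+1$, and $A_{n,k}\ge0$ for $k\le n$. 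So for $2\le k\le n$ and all $n\ge1$ we get
\[
|D_{n+1,k}|\le(1-2a/n)E_n,\qquad E_n:=\max_k|D_{n,k}|.
\]
For $n\ge n_0:=\lceil 2a/(1-a)\rceil$, the same bound also holds at $k=1$, since there $B_{n,1}-a/n=(1-a)/2-a/n\ge0$ while $D_{n,0}=u_{n,0}$ and $D_{n,-1}=0$. The range $n<n_0$ is finite and is absorbed into the constant.

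\emph{Step 3 (boundary terms; main obstacle).} The delicate indices are the extreme ones, $k\in\{0,n+1,n+2\}$ (and $k=1$ for small $n$), where coefficient signs fail or the identity mixes in the boundary values $u_{n,0}$ and $u_{n,n}$. I would bound these directly rather than through the identity. The value $u_{n,n}=\Pp(X_1=\dots=X_n=1)$ is at most $\prod_{m=1}^{n-1}\frac{1+a}{2}$, because after $m$ all-$+1$ steps the next step is $+1$ with probability $(1-a)/2+a=(1+a)/2<1$; the same estimate holds for $u_{n,0}$ by symmetry. These terms are therefore $O(\rho^n)$ with $\rho=(1+a)/2<1$, uniformly in $q$. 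Consequently $D_{n+1,k}$ at the extreme indices, and the boundary contributions at $k=1$, are bounded by $C\rho^n$. One must also check that at $k=n+1$ the possibly negative $A_{n,n+1}$ only multiplies $D_{n,n+1}=-u_{n,n}$, which is exponentially small. This yields
\[
E_{n+1}\le(1-2a/n)E_n+C\rho^n,\qquad n\ge n_0 .
\]

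\emph{Step 4 (iteration).} Iterating Step 3, and using $\prod_{j=m}^{n-1}(1-2a/j)\asymp(m/n)^{2a}$, gives
\[
E_n\le C n^{-2a}\Bigl(E_{n_0}n_0^{2a}+\sum_m m^{2a}\rho^m\Bigr)\le C_1n^{-2a}.
\]
The series converges, and all constants depend only on $a$.
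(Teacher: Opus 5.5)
Your overall plan matches the paper's. It uses the coefficient sums $1-a/n$ and $1-2a/n$, the closed difference recurrence (your coefficient $B_{n,k}-a/n$ is exactly the paper's $B_{n,k-1}$), and the product bound $\prod_{j=m}^{n-1}(1-c/j)\le (m/n)^c$.

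\textbf{Step~1 contains a false claim.} At $k=0$ the recurrence reads $u_{n+1,0}=A_{n,0}u_{n,0}=p\,u_{n,0}$ with $p=(1+a)/2$. Dropping the term $B_{n,0}u_{n,-1}$ is legitimate. However, when $B_{n,0}=(1-a)/2-a/n<0$, i.e.\ $n<2a/(1-a)$, the surviving coefficient $p$ is strictly larger than $A_{n,0}+B_{n,0}=1-a/n$. So you only get $u_{n+1,0}\le p\,M_n$, and the same happens at $k=n+1$. Concretely, for $q=1$ one has $M_1=u_{1,1}=1$ and $M_2=u_{2,2}=p>1-a=(1-a)M_1$. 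Hence neither $M_{n+1}\le(1-a/n)M_n$ for all $n\ge1$ nor $M_n\le\prod_{m<n}(1-a/m)$ is true. The repair is what the paper does, and what you already do in Step~2. The one-step contraction holds for $n\ge n_0=\lceil 2a/(1-a)\rceil$, where $p\le 1-a/n$, and $M_n\le 1$ for $n<n_0$. This gives $M_n\le n_0^{a}n^{-a}$ uniformly in $q$. Alternatively, your Step~3 estimate $u_{n,0},u_{n,n}\le\rho^{n-1}$ (note $\rho=p$) handles those two indices.

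\textbf{Step~3 is correct but longer than needed.} The paper uses three facts:
\begin{itemize}
\item At $k=0$ and $k=n+2$ the difference recurrence collapses to $d_{n+1,0}=p\,d_{n,0}$ and $d_{n+1,n+2}=p\,d_{n,n+1}$.
\item $p\le 1-2a/n$ once $n\ge 4a/(1-a)$.
\item For $n\ge n_0$ every active coefficient with $1\le k\le n+1$ is nonnegative. This includes $A_{n,n+1}=(1-a)/2-a/n$, which is therefore not ``possibly negative'' in your range.
\end{itemize}
Together these give the pure contraction $D_{n+1}\le(1-2a/n)D_n$ with no error term. Your perturbative version $E_{n+1}\le(1-2a/n)E_n+C\rho^n$ rests on the explicit values $u_{n,n}=q\rho^{n-1}$ and $u_{n,0}=(1-q)\rho^{n-1}$ and the convergent series $\sum_m m^{2a}\rho^m$. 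It reaches the same $C_1n^{-2a}$ bound; it simply does not use the fact that the boundary coefficient is $p$, which is eventually below $1-2a/n$.
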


\begin{proof}
Set $M_n=\max_k u_{n,k}$. For $1\leq k\leq n$, both coefficients in
\eqref{eq:massrec} are nonnegative and by \eqref{eq:coefficients}
\[
 A_{n,k}+B_{n,k}=1-\frac an.
\]
Also recall that at $k=0$, $u_{n,-1}=0$ and $A_{n,0}=p=(1+a)/2$. At $k=n+1$,
$u_{n,n+1}=0$ and $B_{n,n+1}=p$. Since there exists $N_0 = N_0(a)$ such that $p \le 1- a/n$ for $n\ge N_0$, we also have 
\begin{equation}\label{eq:mass-contraction}
 M_{n+1}\leq\left(1-\frac an\right)M_n, \ \forall \, n\ge N_0.
\end{equation}

Put $d_{n,k}=u_{n,k}-u_{n,k-1}$. Subtracting the recurrence at $k-1$ from
the recurrence at $k$ gives
\begin{align*}
 d_{n+1,k}
 &=A_{n,k}u_{n,k}+(B_{n,k}-A_{n,k-1})u_{n,k-1}
   -B_{n,k-1}u_{n,k-2}.
\end{align*}
The identities
\[
 A_{n,k-1}=A_{n,k}+\frac an,
 \qquad B_{n,k}=B_{n,k-1}+\frac an,
\]
imply
\[
 B_{n,k}-A_{n,k-1}=B_{n,k-1}-A_{n,k}.
\]
Substitution and regrouping therefore yield the exact recurrence
\begin{equation}\label{eq:difference-rec}
 \begin{aligned}
 d_{n+1,k}
 &=A_{n,k}(u_{n,k}-u_{n,k-1})
   +B_{n,k-1}(u_{n,k-1}-u_{n,k-2})\\
 &=A_{n,k}d_{n,k}+B_{n,k-1}d_{n,k-1}.
 \end{aligned}
\end{equation}
The boundary cases of \eqref{eq:difference-rec} follow directly from the
definition of $u_{n,k}$. For
$1\leq k\leq n+1$,
\[
 A_{n,k}+B_{n,k-1}=1-\frac{2a}{n}.
\]
The smallest active coefficient in this range is
$(1-a)/2-a/n$, which, enlarging $N_0 = N_0(a)$ if necessary, is nonnegative for all $n \ge N_0$. At $k=0$ in \eqref{eq:difference-rec}, only
$A_{n,0}d_{n,0}=p\,d_{n,0}$ remains; at
$k=n+2$, only
$B_{n,n+1}d_{n,n+1}=p\,d_{n,n+1}$ remains. Setting $D_n=\max_k|d_{n,k}|$, we have
\begin{equation}\label{eq:difference-contraction}
 D_{n+1}\leq\left(1-\frac{2a}{n}\right)D_n , \ \forall \, n\ge N_0(a).
\end{equation}

If $y_{n+1}\leq(1-c/n)y_n$ for $n\geq N_0 >c$, then
\[
 y_n\leq y_{N_0}\prod_{j=N_0}^{n-1}\left(1-\frac cj\right).
\]
Since $\log(1-x)\leq-x$ for $0<x<1$,
\begin{align*}
 \log\prod_{j=N_0}^{n-1}\left(1-\frac cj\right)
 &\leq-c\sum_{j=N_0}^{n-1}\frac1j
 \leq-c\int_{N_0}^n\frac{dx}{x}
 =-c\log\frac nN_0.
\end{align*}
Exponentiating gives
\begin{equation}\label{eq:product-bound}
 \prod_{j=N_0}^{n-1}\left(1-\frac cj\right)\leq\left(\frac{N_0}{n}\right)^c.
\end{equation}

From \eqref{eq:product-bound}, $c=a$ in
\eqref{eq:mass-contraction} and
$c = 2a$ in \eqref{eq:difference-contraction}, 
\[
 M_n\leq M_{N_0} \frac{N_0^a}{n^a} \le \frac{N_0^a}{n^a}, \qquad
 D_n\leq D_{N_0} \frac{N_0^{2a}}{n^{2a}} \le \frac{N_0^{2a}}{n^{2a}},
 \qquad n\geq N_0.
\]
Thus the statement hold with $C_0 = N_0^a$ and $C_1 = N_0^{2a}$ which depend on $a$ but not on $q$.
\end{proof}

\section{The uniform superdiffusive local limit theorem}

The superdiffusive limit theorem for the classical ERW states
that, for $a\in(1/2,1)$ and $q\in[0,1]$,
\begin{equation}\label{eq:weak}
 \frac{S_n}{n^a}\xrightarrow{\mathrm{a.s.}} \mathcal{L}_{q,a},
\end{equation}
where $\mathcal{L}_{q,a}$ has a positive bounded smooth density $f_{q,a}$ on $\R$; see
\cite{baur-bertoin,bercu} and
\cite[Theorem 1.3]{guerin-laulin-raschel}. It is therefore natural to ask whether the convergence in \eqref{eq:weak} admits a local
counterpart. The following uniform local limit theorem, which is the main result of this section, provides an affirmative answer.

\smallskip

\begin{theorem}[Uniform superdiffusive local limit theorem]\label{thm:llt}
Let $a=2p-1\in(1/2,1)$ and $q\in[0,1]$. Then
\begin{equation}\label{eq:llt}
\lim_{n\to \infty} \sup_{\substack{j\in\Z\\ j\equiv n\ ({\rm mod}\ 2)}}
 \left|\frac{n^a}{2}\Pp(S_n=j)-f_{q,a}\!\left(\frac{j}{n^a}\right)\right|
 = 0.
\end{equation}
\end{theorem}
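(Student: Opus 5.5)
We will build from the lattice values a continuous piecewise-linear density $g_n$ on $\R$. We then show that $g_n$ converges uniformly to $f_{q,a}$ by combining three ingredients: tightness, the derivative bound of Proposition~\ref{prop:lattice}, and the weak convergence \eqref{eq:weak}.

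\emph{Construction.} Set $x_{n,k}=(2k-n)/n^a$, so consecutive lattice points are spaced $h_n=2/n^a$ apart. Define $g_n(x_{n,k})=\frac{n^a}{2}u_{n,k}$ and interpolate linearly between consecutive points, with $g_n=0$ outside $[x_{n,-1},x_{n,n+1}]$. A direct computation with trapezoids gives $\int g_n=\sum_k u_{n,k}=1$, so $g_n$ is a probability density.

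\emph{Bounds from Proposition~\ref{prop:lattice}.} Uniformly in $n$ we get $\|g_n\|_\infty\le C_0/2$. The slope of $g_n$ on each cell is
\[
\frac{(n^a/2)\,|d_{n,k}|}{h_n}\le \frac{n^{2a}}{4}\,C_1n^{-2a}=\frac{C_1}{4},
\]
so the $g_n$ are uniformly Lipschitz with constant $C_1/4$. By Arzelà--Ascoli, every subsequence has a further subsequence along which $g_n$ converges locally uniformly to some continuous function $g\ge0$.

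\emph{Identifying the limit.} Let $Y_n$ have density $g_n$. We can realize $Y_n$ as $S_n/n^a$ plus an independent perturbation bounded by $h_n\to0$. Indeed, the density of $S_n/n^a+h_nU$, with $U$ having the triangular density on $[-1,1]$, is exactly the linear interpolation $g_n$. Hence $Y_n\Rightarrow\mathcal L_{q,a}$ by \eqref{eq:weak}. For any continuous compactly supported $\varphi$, local uniform convergence gives
\[
\int\varphi g_n\to\int\varphi g \quad\text{and}\quad \int\varphi g_n=\E\varphi(Y_n)\to\int\varphi f_{q,a}.
\]
Therefore $g=f_{q,a}$, and since the limit does not depend on the subsequence, $g_n\to f_{q,a}$ locally uniformly along the full sequence.

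\emph{Main obstacle: uniformity on all of $\R$.} Local uniform convergence does not by itself control the tails. We handle them by the Lipschitz bound. If $g_n(x)=\eta$, then $g_n\ge\eta/2$ on an interval of length $\eta/(C_1/4)$ around $x$. That interval carries mass at least $c\eta^2$ with $c=2/C_1$. By tightness of $(Y_n)$, for each $\eta>0$ there is $R$ with $\Pp(|Y_n|>R-1)<c\eta^2$ for all $n$. Hence $\sup_{|x|>R}g_n(x)<\eta$ once $\eta$ is small, for all $n$. The same bound holds for $f_{q,a}$, since it is Lipschitz as a uniform limit of the $g_n$ (alternatively, use its smoothness and integrability). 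So the tails contribute at most $2\eta$, and the compact part $[-R,R]$ is handled by local uniform convergence. Consequently $\sup_x|g_n(x)-f_{q,a}(x)|\to0$.

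Evaluating at the lattice points $x=j/n^a$ with $j\equiv n\pmod 2$, where $g_n(j/n^a)=\frac{n^a}{2}\Pp(S_n=j)$, yields \eqref{eq:llt}.
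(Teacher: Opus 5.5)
Your proof is correct and follows the paper's route. You use the same triangular-kernel interpolation $g_n$ (the density of $(S_n+2V)/n^a$), the same bounds $\|g_n\|_\infty\le C_0/2$ and Lipschitz constant $C_1/4$ from Proposition~\ref{prop:lattice}, and the same evaluation at lattice points; the only difference is that the paper cites Boos's converse to Scheff\'e's theorem (Lemma~\ref{lem:upgrade}) to upgrade weak convergence to uniform convergence, whereas you prove that step yourself via Arzel\`a--Ascoli, identification of subsequential limits by test functions, and the Lipschitz-plus-tightness tail bound.
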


\smallskip

The first step in the proof is to regularize the rescaled lattice
distributions by triangular interpolation. Set $h_n:=2n^{-a}$, the
mesh size of the rescaled lattice, and
$x_{n,k}:=(2k-n)/n^a$ the lattice points. Let $T(y):=(1-|y|)_+$, and let $V$, independent
of $(S_n)_{n\geq1}$, have density $T$. Denote by $g_n$ the density of
$S_n/n^a+h_nV$, or equivalently of $(S_n+2V)/n^a$. Then
\begin{equation}\label{eq:interpolation}
 g_n(x)=\sum_{k=0}^n\frac{u_{n,k}}{h_n}
 T\!\left(\frac{x-x_{n,k}}{h_n}\right)
\end{equation}
Since $T$ is a probability density, $g_n$ is a continuous probability
density. Moreover, $h_nV\to0$ almost surely, so the probability
measures with densities $g_n$ have the same weak limit as
$S_n/n^a$, namely the law of $\mathcal{L}_{q,a}$.

At every lattice point,
\begin{equation}\label{eq:lattice-values}
 g_n(x_{n,k})=\frac{u_{n,k}}{h_n}=\frac{n^a}{2}u_{n,k}.
\end{equation}
Since $x_{n,k+1}-x_{n,k}=h_n$, the function $g_n$ is affine on
each interval $[x_{n,k},x_{n,k+1}]$, interpolating the two values in
\eqref{eq:lattice-values}. Its slope on this interval is
\[
 \frac{g_n(x_{n,k+1})-g_n(x_{n,k})}{h_n}
 =\frac{u_{n,k+1}-u_{n,k}}{h_n^2}.
\]
Since $u_{n,k}=0$ for
$k\notin\{0,\ldots,n\}$, the same expression applies to the two boundary intervals.

Proposition \ref{prop:lattice} and $h_n=2n^{-a}$ therefore give
$\|g_n\|_\infty\leq C_0/2$, while the absolute value of the slope on
every affine piece is bounded by $C_1/4$. Since $g_n$ is continuous
and piecewise affine, it is globally Lipschitz. Consequently,
\begin{equation}\label{eq:equi}
\sup_n\|g_n\|_\infty\leq\frac{C_0}{2} \quad \textrm{and}
\quad
|g_n(x)-g_n(y)|
\leq\frac{C_1}{4}|x-y|,
\qquad x,y\in\mathbb{R}.
\end{equation}

Now, to prove Theorem \ref{thm:llt} we will also need the next result, which is an immediate consequence of Boos
\cite[Lemma 1]{boos}.

\begin{lemma}\label{lem:upgrade}
Let $(\tilde g_n)$ be probability densities with a common uniform bound and a common
Lipschitz constant. If $\tilde g_n(x)\,dx$ converges weakly to a probability measure
with continuous density $\tilde f$, then $\tilde f$ is Lipschitz, vanishes at both infinities,
and
$\|\tilde g_n- \tilde f\|_\infty \xrightarrow[n\to \infty]{} 0.$
\end{lemma}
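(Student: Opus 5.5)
The plan is to combine Arzel\`a--Ascoli on compact sets with the tightness that weak convergence provides. Let $K$ be the common uniform bound and $L$ the common Lipschitz constant of the $\tilde g_n$, and write $\mu_n(dx)=\tilde g_n(x)\,dx$, with weak limit $\mu(dx)=\tilde f(x)\,dx$.

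\emph{Step 1: identify the local limits.} Fix $R>0$. On $[-R,R]$ the family $(\tilde g_n)$ is uniformly bounded and equicontinuous. By Arzel\`a--Ascoli and a diagonal argument, every subsequence has a further subsequence converging locally uniformly on $\R$ to some function $g$, and $g$ is bounded by $K$ and $L$-Lipschitz. For any continuous compactly supported test function $\varphi$:
\begin{itemize}
\item local uniform convergence gives $\int\varphi\,\tilde g_{n'}\,dx\to\int\varphi\, g\,dx$ along the sub-subsequence;
\item weak convergence gives $\int\varphi\,\tilde g_{n'}\,dx\to\int\varphi\,\tilde f\,dx$.
\end{itemize}
Hence $g=\tilde f$ almost everywhere, and since both are continuous, $g=\tilde f$ everywhere. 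Two consequences follow. First, $\tilde f$ is $L$-Lipschitz. Second, because every subsequence has a sub-subsequence converging to the same limit, the whole sequence satisfies $\sup_{|x|\le R}|\tilde g_n(x)-\tilde f(x)|\to0$ for each fixed $R$.

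\emph{Step 2: vanishing at infinity.} Suppose $\tilde f(x_m)\ge\varepsilon$ along a sequence $|x_m|\to\infty$. By the Lipschitz bound, $\tilde f\ge\varepsilon/2$ on $[x_m-\varepsilon/(2L),\,x_m+\varepsilon/(2L)]$. Passing to a subsequence, these intervals are disjoint, so each carries mass at least $\varepsilon^2/(2L)$. Infinitely many disjoint intervals of this mass contradict $\int\tilde f=1$. Hence $\tilde f$ vanishes at $\pm\infty$.

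\emph{Step 3: uniform tail control, the main obstacle.} Pointwise convergence gives no control of $\tilde g_n$ outside compacts, so the bump argument of Step 2 must be made uniform in $n$, using tightness. Fix $\varepsilon>0$, and shrink it if necessary so that $\varepsilon/(2L)\le1$. Since $(\mu_n)$ converges weakly, it is tight. Choose $R$ such that
\[
\sup_n\mu_n\bigl(\{|x|>R-1\}\bigr)<\frac{\varepsilon^2}{2L}
\quad\text{and}\quad
\mu\bigl(\{|x|>R-1\}\bigr)<\frac{\varepsilon^2}{2L}.
\]
If $\tilde g_n(x)\ge\varepsilon$ for some $|x|>R$, the Lipschitz bound gives $\tilde g_n\ge\varepsilon/2$ on an interval of length $\varepsilon/L$ around $x$. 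This interval lies in $\{|y|>R-1\}$ and carries $\mu_n$-mass at least $\varepsilon^2/(2L)$, a contradiction. So $\tilde g_n<\varepsilon$ on $\{|x|>R\}$ for every $n$, and the same argument gives $\tilde f<\varepsilon$ there. Therefore $\sup_{|x|>R}|\tilde g_n-\tilde f|<2\varepsilon$ for all $n$.

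Combining this with the uniform convergence on $[-R,R]$ from Step 1 gives $\limsup_n\|\tilde g_n-\tilde f\|_\infty\le2\varepsilon$. Since $\varepsilon$ is arbitrary, $\|\tilde g_n-\tilde f\|_\infty\to0$.
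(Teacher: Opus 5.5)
Your proof is correct. The paper gives no argument of its own for this lemma: it records it as an immediate consequence of Boos's converse to Scheff\'e's theorem \cite[Lemma 1]{boos}, applied (as in the proof of Theorem~\ref{thm:llt}) under uniform boundedness and uniform equicontinuity.

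You instead give a self-contained proof in two parts. First, Arzel\`a--Ascoli together with testing against $C_c$ functions identifies every locally uniform subsequential limit with $\tilde f$. This yields both the Lipschitz property of $\tilde f$ and locally uniform convergence of the whole sequence. Second, tightness of the weakly convergent sequence, combined with the ``tent of mass $\varepsilon^2/(2L)$'' argument, makes $\tilde g_n$ and $\tilde f$ uniformly small outside a fixed compact.

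The citation is shorter and covers more general hypotheses. Your route is elementary and shows exactly where the regularity enters, namely in the uniform tail control. It also extends verbatim to a common modulus of continuity in place of a common Lipschitz constant: replace $\varepsilon/(2L)$ by the $\delta$ corresponding to $\varepsilon/2$.

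Two small observations. Your Step~2 is subsumed by Step~3, which already gives $\tilde f<\varepsilon$ off $[-R,R]$. Also, the common uniform bound is used only for Arzel\`a--Ascoli and is in fact automatic. A nonnegative $L$-Lipschitz function of unit integral that takes the value $M$ somewhere dominates a tent of area $M^2/L$, so $M\le\sqrt L$.
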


\begin{proof}[Proof of Theorem \ref{thm:llt}]
Recall that $g_n$ is the density of $Y_n=(S_n+2V)/n^a$. It follows from \eqref{eq:weak} that
$Y_n\xrightarrow{\mathrm{a.s.}}\mathcal{L}_{q,a}$.
Hence the probability measures with densities $g_n$ converge weakly
to the probability measure with density $f_{q,a}$. By \eqref{eq:equi}, the densities $(g_n)$ are uniformly bounded and
uniformly equicontinuous. Since $f_{q,a}$ is continuous, Lemma \ref{lem:upgrade} yields
$\|g_n-f_{q,a}\|_\infty \xrightarrow[n\to \infty]{} 0$.

Now let $j\in\mathbb{Z}$ satisfy $j\equiv n\pmod 2$ and set
$k=(j+n)/2\in\mathbb{Z}$. Under the convention that $u_{n,k}=0$ for
$k\notin\{0,\ldots,n\}$, the identity in
\eqref{eq:lattice-values} holds for every $k\in\mathbb{Z}$. Therefore,
\[
g_n\left(\frac{j}{n^a}\right)
=
\frac{u_{n,k}}{h_n}
=
\frac{n^a}{2}\Pp(S_n=j).
\]
Consequently,
\[
\sup_{\substack{j\in\mathbb{Z}\\ j\equiv n\pmod 2}}
\left|
\frac{n^a}{2}\Pp(S_n=j)
-
f_{q,a}\left(\frac{j}{n^a}\right)
\right|
\leq
\|g_n-f_{q,a}\|_\infty
\xrightarrow[n\to \infty]{} 0,
\]
which proves \eqref{eq:llt}. 
\end{proof}

\section{The breakdown of log-concavity of the limiting density}

As before $\mathcal{L}_a := \mathcal{L}_{1,a}$, whose density we denote as $f_a:=f_{1,a}$. Conditioning on the first increment and using reflection symmetry gives
the convex combination $f_{q,a}(x)=qf_{a}(x)+(1-q)f_{a}(-x)$. This is equivalent to the representation
$\mathcal{L}_{q,a}\stackrel{d}{=}(2\xi_q-1)\mathcal{L}_a$ with $\xi_q\sim\operatorname{Bernoulli}(q)$ independent of $\mathcal{L}_a$. Thus, the entire family of limiting densities is
determined by the case $q=1$. However, shape properties such as unimodality and log-concavity do not
automatically extend from $f_a$ to $f_{q,a}$, since they are not
preserved in general under convex mixtures. A natural first step is
therefore to study $f_a=f_{1,a}$ in detail. Accordingly, in what
follows, we restrict attention to $q=1$.

The main result of this section is that $f_a$ is not log-concave when $a$ is sufficiently close to $1$. Together with the transfer principle
established at Lemma \ref{lem:lc-transfer} just  below, this disproves the conjecture
$a_r\uparrow1$ of \cite{guerin-laulin-raschel-simon}. 

\subsection{The transfer principle for log-concavity.}

So we start with the transfer principle. Let us consider 
$\big(u_{n,k} : n\geq1, \, 1\leq k\leq n\big)$
as a triangular array of probabilities, with $n$ indexing the rows.
Thus, for each $n\geq1$, the $n$th row is
$(u_{n,k})_{1\leq k\leq n}$. We say that this row is log-concave if
\[
u_{n,k}^2\geq u_{n,k-1}u_{n,k+1},
\qquad 2\leq k\leq n-1.
\]
A passage from log-concavity of the finite-time distributions determined by the rows of the array to
log-concavity of the limiting density was considered in \cite[Corollary 2.4]{guerin-laulin-raschel-simon}. Our local limit
theorem yields the following subsequential transfer principle: it is
enough that log-concavity hold along an unbounded sequence of times.

\begin{lemma}\label{lem:lc-transfer}
Fix $a\in(1/2,1)$. If there are arbitrarily large times $n$ for which
$(u_{n,k})_{1\leq k\leq n}$ is log-concave, then $f_a$ is log-concave on $\R$.
\end{lemma}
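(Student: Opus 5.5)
Along an unbounded sequence of log-concave rows, the plan is to use Theorem \ref{thm:llt} (with $q=1$) to pass the discrete inequality $u_{n,k}^2\ge u_{n,k-1}u_{n,k+1}$ to the limit. Fix $x\in\R$ and $t>0$. We aim to show that
\[
f_a(x)^2\ \ge\ f_a(x-t)\,f_a(x+t).
\]
Since $f_a$ is continuous and positive on $\R$, this midpoint log-concavity of $\log f_a$ implies concavity of $\log f_a$, hence log-concavity.

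The direct choice of $k$ with $x_{n,k}\approx x$ only moves the neighbours by the mesh $h_n\to0$, so the one-step inequality alone gives nothing in the limit. First, I would upgrade row log-concavity to the $m$-step inequality
\[
u_{n,k}^2\ge u_{n,k-m}u_{n,k+m},\qquad m\le k-1,\ k+m\le n.
\]
This follows from the standard fact that a positive log-concave sequence has nonincreasing ratios $u_{n,k+1}/u_{n,k}$ on its support. In our case all $u_{n,k}>0$ for $1\le k\le n$ when $q=1$, which I would check from \eqref{eq:massrec}. Multiplying ratios gives $u_{n,k}/u_{n,k-m}\ge u_{n,k+m}/u_{n,k}$.

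Now take $n$ in the given sequence. Choose $k_n$ with $x_{n,k_n}\to x$, and choose $m_n=\lfloor t/h_n\rfloor$, so that $x_{n,k_n\pm m_n}\to x\pm t$. Since $x_{n,k}=(2k-n)/n^a$ and $n^{1-a}\to\infty$, for large $n$ these indices lie in $[1,n]$. Multiplying the $m$-step inequality by $(n^a/2)^2$ and applying Theorem \ref{thm:llt} at the three lattice points gives
\[
f_a(x_{n,k_n})^2+o(1)\ \ge\ \bigl(f_a(x_{n,k_n-m_n})+o(1)\bigr)\bigl(f_a(x_{n,k_n+m_n})+o(1)\bigr).
\]
Here the $o(1)$ terms are uniform, and $f_a$ is bounded. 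Letting $n\to\infty$ along the subsequence and using continuity of $f_a$ gives the desired inequality.

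The main obstacle is this index-range bookkeeping: the rows are indexed by $1\le k\le n$ (excluding $k=0$), and the indices $k_n\pm m_n$ must stay inside this range. There is also a minor point about positivity of $u_{n,k}$, needed for the ratio argument. If positivity failed somewhere, I would instead use the weaker statement that a log-concave sequence without internal zeros still satisfies the $m$-step inequality. Away from the support both sides are handled trivially, because there the right side vanishes.
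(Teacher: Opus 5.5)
Your proof is correct. It reaches the conclusion by a genuinely different limiting argument than the paper, even though both rest on the same discrete fact.

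\textbf{The paper's route.} It takes logarithms $r_{m,k}=\log\bigl(\tfrac{n_m^a}{2}u_{n_m,k}\bigr)$ and notes that their piecewise-affine interpolation $r_m$ is concave. The nonincreasing slopes behind this are exactly your nonincreasing ratios $u_{n,k+1}/u_{n,k}$. It then proves $r_m\to\log f_a$ locally uniformly, using four ingredients:
\begin{itemize}
\item Theorem~\ref{thm:llt};
\item the lower bound $\delta_K=\min_{K^+}f_a>0$;
\item a mean-value estimate for $\log$;
\item the modulus of continuity of $\log f_a$.
\end{itemize}
Concavity then survives the locally uniform limit.

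\textbf{Your route.} You multiply the ratios to obtain the macroscopic three-point inequality $u_{n,k}^2\ge u_{n,k-m}u_{n,k+m}$. You pass it to the limit at only three moving lattice points, using uniformity of the local limit theorem and continuity of $f_a$. You conclude with the classical fact that a continuous midpoint-concave function is concave.

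\textbf{What each buys.} You never take logarithms of the approximants. So you need no uniform lower bound on compacts and no log-domain error control. Positivity of $f_a$ enters only at the end, to make $\log f_a$ a continuous real-valued function. The paper avoids both the ratio-product step and the midpoint-concavity theorem, because concavity of the interpolant is read off directly and is closed under locally uniform limits.

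\textbf{Details check out.} The index bookkeeping works: $k_n\pm m_n\in[1,n]$ for large $n$ because $n^{1-a}\to\infty$. The positivity claim holds by induction from \eqref{eq:massrec}, since $A_{n,k}\ge(1-a)/2>0$ for $k\le n$ and $B_{n,n+1}=(1+a)/2$. Because positivity holds, your fallback remark about sequences with internal zeros is unnecessary.
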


\begin{proof}
The idea is to interpolate the logarithms of the rescaled p.m.f. Log-concavity of the rows makes these interpolations concave,
while Theorem \ref{thm:llt} makes them converge locally uniformly to $\log f_a$. Then the prove is complete, since locally uniform limit of concave functions is concave.

Choose a sequence $n_m\to\infty$ along which the corresponding rows
are log-concave, and set
\[
z_{m,k}:=\frac{n_m^a}{2}u_{n_m,k},
\qquad
r_{m,k}:=\log z_{m,k},
\qquad 1\leq k\leq n_m.
\]
These quantities are well defined because $q=1$ and $p\in(0,1)$ imply
that $u_{n_m,k}>0$ for every $1\leq k\leq n_m$. Log-concavity gives
\[
2r_{m,k}\geq r_{m,k-1}+r_{m,k+1},
\qquad 2\leq k\leq n_m-1.
\]
Since consecutive lattice points are separated by
$h_{n_m}=2n_m^{-a}$, the slopes between consecutive values of
$r_{m,k}$ are nonincreasing. Therefore, the piecewise affine
interpolation $r_m$ of the values $r_{m,k}$ at $x_{n_m,k}$ is concave
on
\[
I_m=[x_{n_m,1},x_{n_m,n_m}].
\]
Since $x_{n_m,1}\to-\infty$ and $x_{n_m,n_m}\to\infty$, every compact
interval is contained in $I_m$ for all sufficiently large $m$.

We now prove that $r_m\to\log f_a$ locally uniformly. Fix a compact
interval $K$ and set
$K^+:=\{x\in\mathbb{R}:\operatorname{dist}(x,K)\leq1\}$. Since $f_a$
is positive and continuous,
\[
\delta_K:=\min_{x\in K^+}f_a(x)>0.
\]
By Theorem \ref{thm:llt},
\[
\varepsilon_m(K)
:=
\max_{\substack{1\leq k\leq n_m\\x_{n_m,k}\in K^+}}
\left|z_{m,k}-f_a(x_{n_m,k})\right|
\xrightarrow[m\to \infty]{} 0.
\]
In particular, $z_{m,k}\geq\delta_K/2$ at all such lattice points for
all sufficiently large $m$. The mean-value theorem then gives
\[
\max_{\substack{1\leq k\leq n_m\\x_{n_m,k}\in K^+}}
\left|r_{m,k}-\log f_a(x_{n_m,k})\right|
\leq
\frac{2}{\delta_K}\varepsilon_m(K)
\xrightarrow[m\to \infty]{} 0.
\]

Let $\omega_K$ be the modulus of continuity of $\log f_a$ on $K^+$.
For sufficiently large $m$, the two lattice points used to interpolate
any $x\in K$ belong to $K^+$ and lie within distance $h_{n_m}$ of $x$.
It follows that
\[
\sup_{x\in K}|r_m(x)-\log f_a(x)|
\leq
\frac{2}{\delta_K}\varepsilon_m(K)
+\omega_K(h_{n_m})
\xrightarrow[m\to \infty]{} 0.
\]
Thus, $r_m\to\log f_a$ locally uniformly. For every compact interval
$K$, the function $r_m$ is concave on $K$ for all sufficiently large
$m$. Passing to the uniform limit shows that $\log f_a$ is concave on
$K$. Since $K$ is arbitrary, $f_a$ is log-concave on $\mathbb{R}$.
\end{proof}

\subsection{Failure of log-concavity for sufficiently strong memory}

Gu\'erin, Laulin and Raschel \cite[Theorem 1.4]{guerin-laulin-raschel}
proved the following moment recursion. Let $m_1=1$ and, for $k\geq2$, let
\begin{equation}\label{eq:moment-rec}
 m_k=\frac{1}{ka-c_k}\sum_{j=1}^{k-1}c_jm_jm_{k-j},
 \qquad
 c_j=\begin{cases}1,&j\text{ even},\\ a,&j\text{ odd}.
 \end{cases}
\end{equation}
Then
\begin{equation}\label{eq:moments-general}
 M_k^{(a)}:=\E[\mathcal{L}_a^k]=\frac{(k-1)!}{a\Gamma(ka)}m_k.
\end{equation}
Here $\Gamma(x)=\int_0^\infty t^{x-1}e^{-t}\,dt$. For $k=2$, since
$c_1=a$ and $c_2=1$, the recursion gives
\[
 m_2=\frac{c_1m_1^2}{2a-c_2}=\frac{a}{2a-1}.
\]
For $k=3$, $c_3=a$ and
\begin{align*}
 m_3
 &=\frac{c_1m_1m_2+c_2m_2m_1}{3a-c_3}
 =\frac{(a+1)m_2}{2a}
 =\frac{a+1}{2(2a-1)}.
\end{align*}
Substitution in \eqref{eq:moments-general}, together with
$a\Gamma(a)=\Gamma(1+a)$ for $k=1$, yields
\begin{equation}\label{eq:three-moments}
 M_1^{(a)}=\frac1{\Gamma(1+a)},\qquad
 M_2^{(a)}=\frac1{(2a-1)\Gamma(2a)},\qquad
 M_3^{(a)}=\frac{a+1}{a(2a-1)\Gamma(3a)}.
\end{equation}

Put $\varepsilon=1-a$. Write $\psi=\Gamma'/\Gamma$, then $\psi(x+1)=\psi(x)+\frac1x$. Since $\psi(1)=-\gamma$, where $\gamma$ is Euler-Mascheroni constant,
$\psi(2)=1-\gamma$ and $\psi(3)=3/2-\gamma$. Differentiating the logarithms of
the three exact moment formulas at $\varepsilon=0$ gives
\begin{align*}
 \left.\frac{d}{d\varepsilon}\log M_1^{(1-\varepsilon)}\right|_{\varepsilon=0}
   &=\psi(2)=1-\gamma,\\
 \left.\frac{d}{d\varepsilon}\log M_2^{(1-\varepsilon)}\right|_{\varepsilon=0}
   &=2+2\psi(2)=4-2\gamma,\\
 \left.\frac{d}{d\varepsilon}\log M_3^{(1-\varepsilon)}\right|_{\varepsilon=0}
   &=-\frac12+1+2+3\psi(3)=7-3\gamma,
\end{align*}
Therefore, with $b_1=1-\gamma$, $b_2=4-2\gamma$, and $b_3=7-3\gamma$,
\begin{align}
 M_j^{(a)}&=1+b_j\varepsilon+o(\varepsilon), \ j=1,2,3, \label{eq:mexp}
\end{align}
Consequently,
\[
 \operatorname{Var}(\mathcal{L}_a)
 =M_2^{(a)}-(M_1^{(a)})^2
 =(b_2-2b_1)\varepsilon+o(\varepsilon)
 =2\varepsilon+o(\varepsilon),
\]
and
\begin{align*}
 \E[(\mathcal{L}_a-\E \mathcal{L}_a)^3]
 &=M_3^{(a)}-3M_1^{(a)}M_2^{(a)}+2(M_1^{(a)})^3\\
 &=(b_3-3b_2+3b_1)\varepsilon+o(\varepsilon) = -2\varepsilon+o(\varepsilon).
\end{align*}
In particular,
\begin{equation}\label{eq:central-asymptotics}
 \operatorname{Var}(\mathcal{L}_a)=2\varepsilon+o(\varepsilon),
 \qquad
 \E[(\mathcal{L}_a-\E \mathcal{L}_a)^3]=-2\varepsilon+o(\varepsilon).
\end{equation}
Thus the skewness coefficient of $\mathcal{L}_a$ satisfies
\begin{equation}\label{eq:skew-diverges}
 \frac{\E[(\mathcal{L}_a-\E \mathcal{L}_a)^3]}{\operatorname{Var}(\mathcal{L}_a)^{3/2}}
 =-\frac{1+o(1)}{\sqrt{2\varepsilon}} = -\frac{1+o(1)}{\sqrt{2(1-a)}}\xrightarrow[a \to 1]{} -\infty.
\end{equation}

Applying Bubeck and Eldan \cite[Lemma 2]{bubeck-eldan} to $X$ and $-X$
gives the sharp skewness bound needed below: if $X$ is centered and has a log-concave density,
then
\begin{equation}\label{eq:eitan}
 \frac{|\E[X^3]|}{(\E[X^2])^{3/2}}\leq2.
\end{equation}
The constant is attained by $Z=E-1$, where $E\sim\operatorname{Exp}(1)$, and
by its reflection. Eitan \cite[Theorem 5]{eitan} later proved the following
sharp generalization to higher odd moments: if $1<r<s$ and $s$ is an odd
integer, then
\begin{equation}\label{eq:eitan-general}
 \frac{|\E[X^s]|^{1/s}}{(\E|X|^r)^{1/r}}
 \leq
 \frac{|\E[Z^s]|^{1/s}}{(\E|Z|^r)^{1/r}},
 \qquad Z=E-1,\quad E\sim\operatorname{Exp}(1).
\end{equation}
The choice $r=2$, $s=3$ recovers the Bubeck--Eldan bound.

\begin{proposition}[Failure of log-concavity]\label{thm:failure-near-one}
The parameter $a_\star$ defined in the Introduction is well-defined, satisfies
$a_{\rm ev}\leq a_\star<0.918$, and, for every $a\in(a_\star,1)$, the density
$f_a$ is not log-concave and every sufficiently late row is non-log-concave.
\end{proposition}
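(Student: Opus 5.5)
The plan is to work with the exact skewness function
\[
\sigma(a):=\frac{M_3^{(a)}-3M_1^{(a)}M_2^{(a)}+2(M_1^{(a)})^3}{\bigl(M_2^{(a)}-(M_1^{(a)})^2\bigr)^{3/2}},\qquad a\in(1/2,1),
\]
which is an explicit expression in Gamma functions by \eqref{eq:three-moments}. It is continuous on $(1/2,1)$, and the denominator is positive because $\mathcal{L}_a$ has a density.

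\textbf{Step 1: $a_\star$ is well defined.} If $f_a$ were log-concave, then $X=\mathcal{L}_a-\E\mathcal{L}_a$ would be centered with a log-concave density, and \eqref{eq:eitan} would give $|\sigma(a)|\le 2$. By \eqref{eq:skew-diverges}, $\sigma(a)\to-\infty$ as $a\to1$. Hence there is $b_0<1$ such that $|\sigma(a)|>2$, and so $f_a$ is not log-concave, for every $a\in(b_0,1)$. The set in the definition of $a_\star$ is therefore nonempty, and $a_\star\le b_0$ is a well-defined number in $[1/2,1)$.

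\textbf{Step 2: $a_\star<0.918$.} Let $a_0\approx0.917665$ be the largest root of $|\sigma(a)|=2$, equivalently $\sigma(a)=-2$, in $(1/2,1)$. I would show that $\sigma(a)<-2$ for all $a\in(0.918,1)$; then $a_\star\le 0.918$. To get a strict inequality, I would apply the same bound on a slightly larger interval $(a_0+\eta,1)$ with $a_0+\eta<0.918$.

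This is the main obstacle, because it is a quantitative statement about an explicit transcendental function on a whole interval. I would handle it in two regimes.
\begin{itemize}
\item Near $1$: make the expansion \eqref{eq:mexp} effective, with explicit remainder bounds from bounds on $\psi'$ on $[1,3]$. This gives $\sigma(a)<-2$ for $a\in(1-\varepsilon_0,1)$ with an explicit $\varepsilon_0$.
\item On the compact interval $[0.9179,1-\varepsilon_0]$: use interval arithmetic (certified enclosures of $\Gamma$ on a fine grid) together with Lipschitz bounds on $\sigma$ derived from bounds on $\psi$. This certifies $\sigma<-2$ throughout.
\end{itemize}
Monotonicity of $\sigma$ is not needed; only the sign of $\sigma+2$ on the interval matters.

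\textbf{Step 3: the remaining claims.}
\begin{itemize}
\item \emph{$f_a$ fails to be log-concave for $a\in(a_\star,1)$.} Given $a>a_\star$, the definition of infimum yields $b<a$ such that $f_{a'}$ is not log-concave for every $a'\in(b,1)$. In particular $f_a$ is not log-concave.
\item \emph{Late rows are non-log-concave.} Apply Lemma \ref{lem:lc-transfer} contrapositively. If infinitely many rows were log-concave, $f_a$ would be log-concave, which is a contradiction. So only finitely many rows are log-concave.
\item \emph{$a_{\rm ev}\le a_\star$.} Suppose $a_{\rm ev}>a_\star$. Pick $a\in(a_\star,a_{\rm ev})$. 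By the definition of $a_{\rm ev}$, the p.m.f. of $S_n$ is eventually log-concave, so there are arbitrarily large log-concave rows. Lemma \ref{lem:lc-transfer} then makes $f_a$ log-concave, contradicting the previous point.
\end{itemize}
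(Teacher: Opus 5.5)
Your proposal is correct and follows essentially the same route as the paper. The steps match: nonemptiness of the defining set comes from \eqref{eq:skew-diverges} together with the Bubeck--Eldan bound; a certified interval-arithmetic check establishes the skewness inequality on an interval reaching slightly below $0.918$; and upward closedness of the defining set together with Lemma~\ref{lem:lc-transfer} gives the remaining claims. The only difference is cosmetic: the paper certifies $\mu_3(a)^2>4V(a)^3$ on $[0.918,1)$ and gets $a_\star<0.918$ from strictness at $0.918$ plus continuity, whereas you start the certified check at $0.9179$ and use the signed form $\sigma<-2$.
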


\begin{proof}
Let $X_a=\mathcal{L}_a-\E \mathcal{L}_a$. If $f_a$ were log-concave, then
\eqref{eq:eitan} would give
$|\E[X_a^3]|/\operatorname{Var}(\mathcal{L}_a)^{3/2} \leq2.$
Equation \eqref{eq:skew-diverges} shows that the set defining $a_\star$ is
nonempty. Put $V(a)=\operatorname{Var}(\mathcal{L}_a)$ and
$\mu_3(a)=\E[X_a^3]$. A $256$-bit outward-rounded Arb verification
\cite{johansson-arb}, using only \eqref{eq:three-moments}, certifies
$\mu_3(a)^2>4V(a)^3$ for every $a\in[0.918,1)$. It uses $2000$ rational
subintervals on $[0.918,0.99]$ and derivative bounds on $[0.99,1)$; the complete
certificate is available in the repository described in
Remark~\ref{rem:reproducibility}. Thus $f_a$ is not log-concave
throughout $[0.918,1)$. Strictness at $0.918$ and continuity give
$a_\star<0.918$. The defining set is upward closed, and Lemma
\ref{lem:lc-transfer} then yields eventual failure of row log-concavity for every
$a>a_\star$, hence $a_{\rm ev}\leq a_\star$.
\end{proof}

We next obtain a separate, much sharper bound for $a_{\rm ev}$ by a certified
analysis of the exact finite-time recurrence. Recall from the introduction that, by
\cite[Remark 2.3]{guerin-laulin-raschel-simon}, there exists an increasing sequence $(a_r)_{r\geq0}$ such that $a_{\rm ev}=\lim_{r\to\infty}a_r$ and
\begin{equation}\label{eq:threshold-structure}
 \text{all rows from time $r+3$ onward are log-concave}
 \quad\Longleftrightarrow\quad a\leq a_r.
\end{equation}

\begin{proposition}\label{prop:certified-upper}
At $a=0.80399$, every even row with $n\geq20000$ is non-log-concave.
Consequently, $a_{\rm ev}\leq 0.80399$.
\end{proposition}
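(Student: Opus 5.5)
The plan is to reduce the claim to a finite, certified computation followed by an inductive propagation argument. The ``consequently'' part is immediate from \eqref{eq:threshold-structure}. If every even row with $n\geq20000$ is non-log-concave at $a=0.80399$, then for every $r$ the rows from time $r+3$ onward are not all log-concave, so $0.80399>a_r$ for every $r$. Letting $r\to\infty$ gives $a_{\rm ev}\leq0.80399$. So all the work lies in the first assertion.

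For the first assertion I would locate the failure of log-concavity near the left boundary $k=1$, where $u_{n,k}$ is tiny and the recurrence \eqref{eq:massrec} is explicit. The guiding heuristic is as follows. For $q=1$ and small $k$ (few $+1$ steps), the recurrence for small $k$ is nearly triangular: $u_{n+1,k}=A_{n,k}u_{n,k}+B_{n,k}u_{n,k-1}$ with $A_{n,k}\approx p$ and $B_{n,k}\approx(1-a)/2$. Hence $u_{n,k}$ behaves like $c_k(n)\,p^{n}$ times a polynomial-type correction in $n$, and the ratios $\rho_{n,k}:=u_{n,k}^2/(u_{n,k-1}u_{n,k+1})$ converge, as $n\to\infty$ with $k$ fixed, to explicit limits depending only on $a$. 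The even-row condition suggests an oscillation in $n$ that the certification exploits.

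Concretely, the steps are as follows.
\begin{enumerate}
\item Run the recurrence \eqref{eq:massrec} exactly in interval arithmetic, as in the Arb verification of Proposition~\ref{thm:failure-near-one}, but only for the first $K$ columns $k\leq K$. This is legitimate because column $k$ at time $n+1$ depends only on columns $\leq k$ at time $n$. Doing so up to $n=20000$ certifies $u_{20000,k}^2<u_{20000,k-1}u_{20000,k+1}$ at some small index $k_0$.
\item Normalize by setting $w_{n,k}=u_{n,k}/u_{n,1}$. The recurrence for the first few columns becomes a finite-dimensional linear system with coefficients $1+O(1/n)$ perturbations of constants. From this I will derive an invariant region, that is, explicit interval bounds on the ratios $w_{n,k}$ for $k\leq k_0+1$. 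These bounds are preserved from $n$ to $n+2$ for all $n\geq20000$, via error terms of the form $C/n$ that are summable or monotone. Stepping by two handles the parity restriction.
\item Show that the invariant region lies entirely inside the set where $w_{k_0}^2<w_{k_0-1}w_{k_0+1}$. By induction, every even row from $20000$ on is then non-log-concave.
\end{enumerate}

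The main obstacle is the second step: producing a genuinely closed invariant box for all $n\geq 20000$ with rigorous tail control. The margin near the threshold $a\approx0.80399$ is thin, so the $O(1/n)$ drift must be bounded carefully. My first attempt would be a finite-order expansion of the column ratios in powers of $1/n$ up to a fixed order. I would verify its leading terms symbolically and bound the remainder by a contraction estimate analogous to \eqref{eq:mass-contraction}. I would choose the starting time $20000$ precisely so that this remainder is small enough to keep the box strictly inside the non-log-concave region.
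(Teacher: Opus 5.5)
\textbf{There is a genuine gap: the failure you are trying to certify is not at the left edge.} Your handling of the ``consequently'' part via \eqref{eq:threshold-structure} is correct and is what the paper does. For the first assertion, your Steps 1--3 cannot succeed in the fixed-$k$ edge regime, because the rows are asymptotically log-concave there.

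Carry your own heuristic through. Put $b=(1-a)/2$ and $\lambda=b/(p+a)=(1-a)/(1+3a)$. From \eqref{eq:massrec}, the column ratios $r_{n,k}=u_{n,k}/u_{n,k-1}$ satisfy $r_{n+1,k}=(A_{n,k}r_{n,k}+B_{n,k})/(A_{n,k-1}+B_{n,k-1}/r_{n,k-1})$, with $1/r_{n,1}:=0$ since $u_{n,0}=0$. Inserting $r_{n,k}\sim\mu_k n$ gives, inductively in $k$, $\mu_k(p+a)(k-1)=b$, i.e.\ $u_{n,k}/u_{n,k-1}\sim\lambda n/(k-1)$. So the left edge has a Poisson-type profile $u_{n,k}\approx C\,p^n n^{-a/p}(\lambda n)^{k-1}/(k-1)!$, and consequently $u_{n,k}^2/(u_{n,k-1}u_{n,k+1})\to k/(k-1)>1$ for every fixed $k\geq2$.

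This has three consequences for your plan. First, for any fixed $k_0$ the row is strictly log-concave at $k_0$ for all large $n$, so whatever index Step 1 finds at $n=20000$ cannot persist. Second, no invariant region for the edge ratios can lie inside the non-log-concave set, since the true trajectory converges to a strictly log-concave point. Third, the limit $k/(k-1)$ does not depend on $a$, so the fixed-$k$ edge regime cannot see a threshold such as $a\approx0.804$ at all. Separately, $w_{n,k}=u_{n,k}/u_{n,1}$ grows like $n^{k-1}$, so it would need renormalizing before any box could be invariant.

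\textbf{The missing idea is that the failure is a bulk phenomenon on the scale $n^a$, at the center $k=n/2$ of the row, i.e.\ at $S_n=0$.} That is why only even rows appear; it has nothing to do with an oscillation in $n$. The paper works at the center with the rescaled centered differences $W_n^{(\nu)}=n^{(\nu+1)a}2^{-(\nu+1)}\nabla^\nu u_{n,(n+\nu)/2}$, which heuristically play the role of $f_a^{(\nu)}(0)$.

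Since $\nabla^\nu$ passes through the recurrence with $B_{n,k}$ replaced by $B_{n,k-\nu}$ (see \eqref{eq:higher-difference}), these quantities satisfy the closed two-step relation \eqref{eq:centered-two-step}. There $Q_{n,\nu}=1+O(n^{-2})$, and the coupling to order $\nu+2$ carries the factor $n^{-2a}$, which is summable precisely because $2a>1$. The certification then has three pieces: a certified run to $N=20000$, interval propagation through order $17$, and the tail bound $\sum_{j\geq0}(N+2j)^{-2a}<0.001997$. Together they give $H_n=(W_{n+1}^{(1)})^2-W_n^{(0)}W_n^{(2)}<0$ for all even $n\geq N$. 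Identity \eqref{eq:centered-determinant} turns this into $u_{n,n/2}^2<u_{n,n/2-1}u_{n,n/2+1}$.

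Your architecture of a finite certified computation plus propagation is the right one. It has to be run on these centered, $n^a$-rescaled quantities, not on edge column ratios.
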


\begin{proof} 
Fix $a=0.80399$ and $q=1$. If $\nabla^0 v_k = v_k$, $\nabla^1 v_k = \nabla v_k=v_k-v_{k-1}$ and $\nabla^\nu v_k= \nabla^{\nu-1} v_k- \nabla^{\nu-1} v_{k-1}$ for $\nu \ge 2$, repeated
subtraction in \eqref{eq:massrec}, using \eqref{eq:coefficients}, gives
\begin{equation}\label{eq:higher-difference}
 \nabla^\nu u_{n+1,k}=A_{n,k}\nabla^\nu u_{n,k}
 +B_{n,k-\nu}\nabla^\nu u_{n,k-1},\qquad 0\leq \nu \leq17.
\end{equation}
For $n+\nu$ even, put $c_\nu=(\nu+1)a$ and
\[
 W_n^{(\nu)}:=\frac{n^{c_\nu}}{2^{\nu+1}}\nabla^\nu u_{n,(n+\nu)/2}.
\]
Applying \eqref{eq:higher-difference} twice at the center gives
\begin{equation}\label{eq:centered-two-step}
W_{n+2}^{(\nu)}=Q_{n,\nu}W_n^{(\nu)}+R_{n,\nu}n^{-2a}W_n^{(\nu+2)},
\end{equation}
with
\[
 Q_{n,\nu}=\left(\frac{n+2}{n}\right)^{c_\nu}
 \left(1-\frac{c_\nu}{n+1}\right)\left(1-\frac{c_\nu}{n}\right),\qquad
 R_{n,\nu}=\left(\frac{n+2}{n}\right)^{c_\nu}
 \left(1-\frac{c_\nu}{n+1}\right)\left(1-\frac{c_\nu+a}{n}\right).
\]
Set $H_n=(W_{n+1}^{(1)})^2-W_n^{(0)}W_n^{(2)}$. A $256$-bit
outward-rounded Arb computation \cite{johansson-arb} of the exact recurrence to
$N=20000$, followed by interval propagation of \eqref{eq:centered-two-step}
through order $17$ and the bound
$\sum_{j\geq0}(N+2j)^{-2a}<0.001997$, certifies
\[
 H_N<-8.5125\times10^{-5},\qquad
 \sup_{\substack{n\geq N\\ n\text{ even}}}(H_n-H_N)<6.757\times10^{-5}.
\]
Thus $H_n<0$ for every even $n\geq N$. For such $n$, direct algebra from \eqref{eq:massrec} gives
\begin{equation}\label{eq:centered-determinant}
 \frac{n^{4a}}{16}\left(u_{n,n/2}^2-u_{n,n/2-1}u_{n,n/2+1}\right)
 =H_n+(W_{n+1}^{(1)})^2(P_n^{-2}-1)-n^{-2a}(W_n^{(2)})^2,
\end{equation}
where $P_n=(1+1/n)^{2a}(1-2a/n)$. The same certified enclosures give
\[
 (W_{n+1}^{(1)})^2(P_n^{-2}-1)\leq0.2495n^{-2},\qquad
 n^{-2a}(W_n^{(2)})^2\geq0.03782n^{-2a}.
\]
Since $a<1$, the latter dominates already at $n=N$ and hence for all
larger $n$. Therefore \eqref{eq:centered-determinant} is negative for
every even $n\geq N$. The complete outward-rounded verification is
included in the ancillary material, see Remark \ref{rem:reproducibility}.
Now fix $r\geq0$ and choose an even integer
$n\geq\max\{N,r+3\}$. Since the $n$th row is not log-concave, it is not
the case that all rows from time $r+3$ onward are log-concave.
Consequently, \eqref{eq:threshold-structure} gives $a>a_r$. Since $r$
was arbitrary,
so $a_{\rm ev}\leq a = 0.80399$.
\end{proof}


From the previous results we now have the main result of this section.

\begin{theorem}\label{thm:boundary}
The eventual-log-concavity boundary satisfies
\begin{equation}\label{eq:boundary}
\frac{\sqrt5-1}{2}
\leq a_{\rm ev}
\leq \min\{0.80399,a_\star\}
\leq a_\star<0.918.
\end{equation}
Moreover, the following dichotomy holds:
\begin{enumerate}
\item if $a<a_{\rm ev}$, all sufficiently late rows are log-concave
and $f_a$ is log-concave;
\item if $a>a_{\rm ev}$, non-log-concave rows occur at arbitrarily
large times.
\end{enumerate}
\end{theorem}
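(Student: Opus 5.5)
The theorem assembles results already established, so the plan is mainly bookkeeping. I will use only the threshold structure \eqref{eq:threshold-structure}, the identity $a_{\rm ev}=\lim_r a_r$ with $(a_r)$ increasing, the transfer principle (Lemma~\ref{lem:lc-transfer}), Proposition~\ref{thm:failure-near-one} and Proposition~\ref{prop:certified-upper}.

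\textbf{The chain of inequalities \eqref{eq:boundary}.} The lower bound $(\sqrt5-1)/2\le a_{\rm ev}$ is quoted from \cite[Proposition 2.2]{guerin-laulin-raschel-simon}. It says that for $a\le(\sqrt5-1)/2$ some threshold $a_r$ is at least $a$, and monotonicity of $(a_r)$ then passes this to the limit. Proposition~\ref{prop:certified-upper} gives $a_{\rm ev}\le 0.80399$. Proposition~\ref{thm:failure-near-one} gives $a_{\rm ev}\le a_\star$ and $a_\star<0.918$. Combining these yields $a_{\rm ev}\le\min\{0.80399,a_\star\}\le a_\star<0.918$.

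\textbf{Item 1.} Let $a<a_{\rm ev}$. Since $a_r\uparrow a_{\rm ev}$, there is some $r$ with $a\le a_r$. By \eqref{eq:threshold-structure}, every row from time $r+3$ onward is log-concave. In particular, log-concave rows occur at arbitrarily large times, so Lemma~\ref{lem:lc-transfer} shows that $f_a$ is log-concave.

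\textbf{Item 2.} Let $a>a_{\rm ev}$. Then $a>a_r$ for every $r$, because $a_r\le a_{\rm ev}$. By \eqref{eq:threshold-structure}, for each $r$ the rows from time $r+3$ onward are not all log-concave, so some row with index at least $r+3$ fails to be log-concave. Since $r$ is arbitrary, non-log-concave rows occur at arbitrarily large times.

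The only delicate point is the direction of the equivalence in \eqref{eq:threshold-structure} and its use with a strict versus a non-strict inequality at the threshold. Beyond that, everything is a direct citation of results already proved.
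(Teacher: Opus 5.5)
Your proposal is correct and follows essentially the same bookkeeping as the paper's proof. Both arguments get the lower bound from $a_0=(\sqrt5-1)/2$ plus monotonicity of $(a_r)$, the bound $0.80399$ from Proposition~\ref{prop:certified-upper}, and both items of the dichotomy from \eqref{eq:threshold-structure} together with Lemma~\ref{lem:lc-transfer}. The one small difference is how $a_{\rm ev}\le a_\star$ is obtained: the paper derives it inside this proof from item~1 (since $f_a$ is log-concave for all $a<a_{\rm ev}$, no $b<a_{\rm ev}$ can lie in the set defining $a_\star$), whereas you cite it from Proposition~\ref{thm:failure-near-one}, which also states it; either route is valid.
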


\begin{proof}
\cite[Proposition 2.2 and Remark 2.3]
{guerin-laulin-raschel-simon} give $a_0=(\sqrt5-1)/2$. Since $(a_r)_{r\geq0}$ is nondecreasing,
$a_0\leq a_{\rm ev}$, which gives the lower bound in
\eqref{eq:boundary}. Proposition \ref{prop:certified-upper} gives $a_{\rm ev}\leq0.80399$.

Suppose first that $a<a_{\rm ev}$. Then $a\leq a_r$ for some $r$, so
\eqref{eq:threshold-structure} implies that all rows from time $r+3$
onward are log-concave. Lemma \ref{lem:lc-transfer} then shows that $f_a$ is log-concave. It follows that $f_a$ is log-concave for every
$a<a_{\rm ev}$. Hence no $b<a_{\rm ev}$ belongs to the set defining $a_\star$. Therefore $a_{\rm ev}\leq a_\star$. Proposition \ref{thm:failure-near-one} gives
$a_\star<0.918$. Combining these inequalities yields \eqref{eq:boundary}.

Finally, if $a>a_{\rm ev}$, then $a>a_r$ for every $r$, and \eqref{eq:threshold-structure} gives non-log-concave rows at arbitrarily large times.
\end{proof}

\begin{remark}
The parameter $a_\star$ is an upper failure threshold, not a proved sharp
transition point for the limiting density. We have
$a<a_{\rm ev}\Rightarrow f_a$ log-concave and $a>a_\star\Rightarrow f_a$
non-log-concave, with no converse or assertion at either boundary.
\end{remark}

\section{Deterministic numerical evidence}

This section presents deterministic numerical evidence concerning $a_{\rm ev}$. For a fixed row, write
$\ell_{n,k}=\log u_{n,k}$ and define
\begin{equation}\label{eq:numerical-gap}
 \Delta_n(a)=\min_{2\leq k\leq n-1}
 \bigl(2\ell_{n,k}-\ell_{n,k-1}-\ell_{n,k+1}\bigr).
\end{equation}
The row is log-concave exactly when $\Delta_n(a)\geq0$. We evaluated the exact
recurrence \eqref{eq:massrec} in the log domain. More precisely, for every pair
of present terms put
\[
 \alpha_{n,k}=\log A_{n,k}+\ell_{n,k},\qquad
 \beta_{n,k}=\log B_{n,k}+\ell_{n,k-1},
\]
and compute
\begin{equation}\label{eq:logsumexp}
 \ell_{n+1,k}=m_{n,k}+
 \log\!\left(e^{\alpha_{n,k}-m_{n,k}}+e^{\beta_{n,k}-m_{n,k}}\right),
 \qquad
 m_{n,k}=\max\{\alpha_{n,k},\beta_{n,k}\}.
\end{equation}
For each fixed \(n\), we locate the sign change of \(a\mapsto\Delta_n(a)\)
by bisection, starting from \([0.500001,0.999999]\), and denote by
\(\beta_n\) the midpoint of the final bracket, whose width is at most
\(5\times10^{-8}\). Boundary terms are omitted, and a common additive
shift of the log-masses after each update prevents underflow without
changing \(\Delta_n\). Since \(n=r+3\), \(\beta_n\) is a numerical
approximation of \(a_r=a_{n-3}\). The computation is
deterministic, but does not provide a certified enclosure of \(a_{\rm ev}\). Table \ref{tab:thresholds} provides numerical approximations for some values of $r$.

\begin{table}[ht]
\centering
\caption{Finite-row sign-change values obtained by deterministic
bisection of the exact mass recurrence. Since \(r=n-3\), \(\beta_n\)
is a numerical approximation of \(a_r\). The first four values reproduce those
in \cite[Remark 2.3]{guerin-laulin-raschel-simon}.}
\label{tab:thresholds}
\begin{tabular}[t]{@{}cc@{}}
\hline
\(n\) & \(\beta_n\)\\
\hline
3   & 0.618033982\\
4   & 0.636065447\\
5   & 0.670605098\\
6   & 0.684081088\\
100 & 0.788605908\\
\hline
\end{tabular}
\hspace{1.5em}
\begin{tabular}[t]{@{}cc@{}}
\hline
\(n\) & \(\beta_n\)\\
\hline
500   & 0.798565924\\
1000  & 0.800452105\\
5000  & 0.802592201\\
10000 & 0.803018194\\
\hline
\end{tabular}
\end{table}
\begin{figure}[H]
\centering
\includegraphics[width=0.60\textwidth]{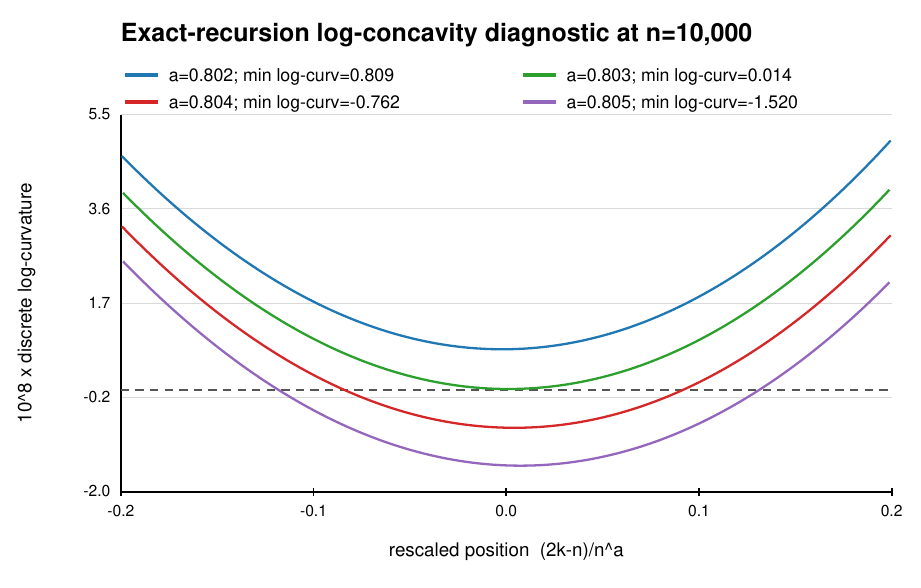}

\caption{Deterministic evaluation of $10^8$ times the discrete log-curvature in
\eqref{eq:numerical-gap} at $n=10000$. The dashed line is zero. A negative value
certifies failure of row log-concavity at that finite time. The observed sign
change near $a=0.804$ is numerical evidence.}
\label{fig:transition}
\end{figure}
Figure \ref{fig:transition} displays the discrete
log-curvature for four nearby parameters. At $n=10000$ its minimum is positive
for $a=0.802$ and $0.803$, but negative for $a=0.804$ and $0.805$. The slowly
increasing values in Table \ref{tab:thresholds}, together with the obtained
threshold, suggest
$a_{\rm ev}\approx0.80399$,
$p_{\rm ev}=(1+a_{\rm ev})/2\approx0.902$.




\begin{remark}[Reproducibility]\label{rem:reproducibility}
The code and reference outputs used for the certified Arb calculations in
Proposition~\ref{thm:failure-near-one} and Proposition~\ref{prop:certified-upper},
together with the deterministic computations for Table~\ref{tab:thresholds}
and Figure~\ref{fig:transition}, are available at
\url{https://github.com/heliotrinas-alt/superdiffusive-erw-local-limit}.
\end{remark}


\begin{thebibliography}{99}
\raggedright
\setlength{\itemsep}{0.35em}

\bibitem{schutz-trimper}
G.~M. Sch\"utz and S. Trimper,
\newblock Elephants can always remember: exact long-range memory effects in a
non-Markovian random walk,
\newblock \emph{Phys. Rev. E} \textbf{70} (2004), 045101.

\bibitem{baur-bertoin}
E. Baur and J. Bertoin,
\newblock Elephant random walks and their connection to P\'olya-type urns,
\newblock \emph{Phys. Rev. E} \textbf{94} (2016), 052134.

\bibitem{bercu}
B. Bercu,
\newblock A martingale approach for the elephant random walk,
\newblock \emph{J. Phys. A: Math. Theor.} \textbf{51} (2018), 015201.

\bibitem{kubota-takei}
N. Kubota and M. Takei,
\newblock Gaussian fluctuation for superdiffusive elephant random walks,
\newblock \emph{J. Stat. Phys.} \textbf{177} (2019), 1157--1171.

\bibitem{fan-hu-ma}
X. Fan, H. Hu and X. Ma,
\newblock Cram\'er moderate deviations for the elephant random walk,
\newblock \emph{J. Stat. Mech. Theory Exp.} (2021), 023402.

\bibitem{guerin-laulin-raschel}
H. Gu\'erin, L. Laulin and K. Raschel,
\newblock A fixed-point equation approach for the superdiffusive elephant random
walk,
\newblock \emph{Ann. Inst. H. Poincar\'e Probab. Statist.} \textbf{62} (2026),
973--1005.

\bibitem{guerin-laulin-raschel-simon}
H. Gu\'erin, L. Laulin, K. Raschel and T. Simon,
\newblock On the limit law of the superdiffusive elephant random walk,
\newblock \emph{Electron. J. Probab.} \textbf{30} (2025), Paper No. 102, 1--25.

\bibitem{boos}
D.~D. Boos,
\newblock A converse to Scheff\'e's theorem,
\newblock \emph{Ann. Statist.} \textbf{13} (1985), 423--427.

\bibitem{bubeck-eldan}
S. Bubeck and R. Eldan,
\newblock The entropic barrier: a simple and optimal universal self-concordant
barrier,
\newblock arXiv:1412.1587 (2014), Lemma 2.

\bibitem{eitan}
Y. Eitan,
\newblock The centered convex body whose marginals have the heaviest tails,
\newblock \emph{Studia Math.} \textbf{274} (2024), 201--215.

\bibitem{johansson-arb}
F. Johansson,
\newblock Arb: efficient arbitrary-precision midpoint-radius interval arithmetic,
\newblock \emph{IEEE Trans. Comput.} \textbf{66} (2017), 1281--1292.

\bibitem{peres-qin}
Y. Peres and S. Qin,
\newblock Transition probabilities of step-reinforced random walks,
\newblock arXiv:2604.07227v1 (2026).

\end{thebibliography}
\end{document}